\tikzstyle{sommetb}=[circle,draw,scale=0.5,fill=black]
\tikzstyle{sommeti}=[circle,draw,scale=0.3,fill=black]
\newtheorem{definition}{Definition}
\newtheorem{prop}{Proposition}
\newtheorem{remark}{Remark}
\newtheorem{exe}{Example}
\newtheorem{lemma}{Lemma}
\newtheorem{theorem}{Theorem}
\newtheorem{cor}{Corollary}
\title{Isoperimetric upper bound for the first eigenvalue of discrete Steklov problems}
\author{Hélène Perrin}
\date{}
\begin{document}
\maketitle

\begin{abstract}

We study upper bounds for the first non-zero eigenvalue of the Steklov problem defined on finite graphs with boundary. For finite graphs with boundary included in a Cayley graph associated to a group of polynomial growth, we give an upper bound for the first non-zero Steklov eigenvalue depending on the number of vertices of the graph and of its boundary. As a corollary, if the graph with boundary also satisfies a discrete isoperimetric inequality, we show that the first non-zero Steklov eigenvalue tends to zero as the number of vertices of the graph tends to infinity. This extends recent results of Han and Hua, who obtained a similar result in the case of $\mathbb{Z}^n$. We obtain the result using metric properties of Cayley graphs associated to groups of polynomial growth.

\end{abstract}

\begin{section}{Introduction}

Let $M$ be a compact Riemannian manifold of dimension $n\geq2$ with boundary $\partial M$. The Steklov problem on $M$ is

\[
\begin{cases}
\triangle u =0&\text{in } M\\
\frac{\partial u}{\partial n}=\sigma u &\text{on } \partial M
\end{cases}
\]
where $\triangle$ is the Laplace-Beltrami operator and $\frac{\partial u}{\partial n}$ is the outward normal derivative along the boundary $\partial M$. It is a well known result that if the boundary is sufficiently regular, the spectrum of the Steklov problem is discrete and its eigenvalues form a sequence $0=\sigma_0<\sigma_1\leq\sigma_2\leq \dots \nearrow \infty$.

An important question in studying the spectral geometry of the Steklov problem is to maximize its eigenvalues under a constraint on the volume of the boundary or on the volume of the manifold. For simply-connected planar domains of prescribed perimeter, it has been shown by R. Weinstock that the disk maximizes $\sigma_1$ (see \cite{Weinstock54}). For bounded Lipschitz domains of fixed volume in $\mathbb{R}^n$, F. Brock proved that the ball maximizes $\sigma_1$ (see \cite{Brock01}). Several upper bounds have also been obtained for different families of manifolds where the volume or the volume of the boundary is fixed. In 2017, a survey of the literature on this question has been given in \cite{gpsurvey}. More recently, it was shown in \cite{bucur2017weinstock} that a Weinstock-type inequality holds in $\mathbb{R}^n$ in the class of convex sets, that is, that among all bounded convex sets in $\mathbb{R}^n$ with prescribed volume of the boundary, the ball maximizes $\sigma_1$.

In this article, we investigate isoperimetric upper bounds for $\sigma_1$ of the Steklov problem on graphs. The Steklov problem on graphs is a discrete analogue of the Steklov problem and has recently received attention in the literature. In \cite{hua2017} and \cite{PStek2018}, lower bounds for the first non-zero eigenvalue are given. A lower bound for higher eigenvalues is given in \cite{hassannezhad2017higher}. For subgraphs of integer lattices, an upper bound has been obtained by W. Han and B. Hua \cite{hua2019steklov}. In \cite{colbois2016steklov}, a relation between the eigenvalues of the Steklov problem on a manifold and the eigenvalues of a discrete problem is established. Hence, results in the discrete and in the Riemannian settings are closely related and the study of the discrete problem is a possible approach to understand the spectral geometry of the Steklov problem.


\begin{definition}
A graph with boundary is a pair $(\Gamma, B)$, where $\Gamma=(V,E)$ is a simple, that is without loops or multiple edges, connected graph and $B\subset V$ is a subset of $V$ such that $B\not = \emptyset$ and $E(B,B)=\emptyset$. We call $B$ the boundary of the graph and $B^c$ the interior.
\label{defgraph}
\end{definition}

In this paper, we always consider graphs with boundary that are finite. The space of all real functions defined on the vertices $V$, denoted by $\mathbb{R}^{V}$, is the Euclidean space of dimension $|V|$. Similarly, the space of real functions defined on the vertices of the boundary, denoted $\mathbb{R}^{B}$, is the Euclidean space of dimension $|B|$.

The Laplacian $\triangle$ of a function $v\in  \mathbb{R}^{V}$ is defined by
\[
(\triangle v)(i)=\sum_{j\sim i}(v(i)-v(j))
\]
where $i\sim j$ signifies that $\{i,j\}\in E$.

A function $v\in \mathbb{R}^V$ is called harmonic if
\[
(\triangle v)(i)=\sum_{j\sim i}(v(i)-v(j))=0\quad \forall i \not\in B.
\]

The normal derivative operator $\frac{\partial v}{\partial n}:\mathbb{R}^V \rightarrow \mathbb{R}^{B}$ is defined by
\[\left(\frac{\partial v}{\partial n}\right)(i)=\sum_{j\in B^c,j\sim i}(v(i)-v(j)) \quad i \in B.
\]


\begin{definition}The Steklov problem on a finite graph with boundary $(\Gamma,B)$ is the eigenvalue problem

\[
\begin{cases}
(\triangle v)(i)=0 &\text{if $i\not\in B$}\\
(\frac{\partial v}{\partial n})(i)=\sigma v(i) &\text{if $i\in B$}
\end{cases}
\]
where $v\not \equiv 0$ and $\sigma$ is a spectral parameter.
\end{definition}

As shown in \cite{PStek2018}, the solutions of this problem coincide with the eigenvalues of the discrete Dirichlet-to-Neumann operator defined in \cite{hua2017}. They form a finite sequence $0=\sigma_0\leq\sigma_1\leq...\leq\sigma_{b-1}$, where $b=|B|$.

We recall that we are interested in upper bounds for $\sigma_1$. Therefore, we will always assume that $|B|>1$ because if not, $\sigma_1$ is not defined. A first remark is that without any additional geometric constraint on $(\Gamma, B)$, $\sigma_1$ may become unbounded. This occurs in the following example.
\begin{exe}
We consider the family of graphs with boundary $\{(G_n,B)\}_{n\in\mathbb{N}}$ as shown in Figure \ref{fig1}, that is, two boundary vertices (the bigger vertices) are connected by $n$ paths of length $2$. By computation, we obtain that $\sigma_1(G_n,B)=n$ and hence $\sigma_1$ tends to $+\infty$ as $n$ tends to $+\infty$.

\begin{figure}[h]
\begin{tikzpicture}[scale=0.6]
\draw(-1,0)node[fill=white]{};
\draw(1,0)node[sommetb]{};
\draw(3,0)node[sommetb]{};
\draw(2,0)node[sommeti]{};
\draw(1,0)--(3,0);
\end{tikzpicture}
\begin{tikzpicture}[scale=0.6]
\draw(0,0)node[fill=white]{};
\draw(1,0)node[sommetb]{};
\draw(3,0)node[sommetb]{};
\draw(2,0)node[sommeti]{};
\draw(2,0.5)node[sommeti]{};
\draw(1,0)--(3,0);
\draw(1,0)--(2,0.5)--(3,0);
\end{tikzpicture}
\begin{tikzpicture}[scale=0.6]
\draw(0,0)node[fill=white]{};
\draw(1,0)node[sommetb]{};
\draw(3,0)node[sommetb]{};
\draw(2,0)node[sommeti]{};
\draw(2,0.5)node[sommeti]{};
\draw(2,-0.5)node[sommeti]{};
\draw(1,0)--(3,0);
\draw(1,0)--(2,0.5)--(3,0);
\draw(1,0)--(2,-0.5)--(3,0);
\end{tikzpicture}
\begin{tikzpicture}[scale=0.6]
\draw(0,0)node[fill=white]{};
\draw(1,0)node[sommetb]{};
\draw(3,0)node[sommetb]{};
\draw(2,0)node[sommeti]{};
\draw(2,0.5)node[sommeti]{};
\draw(2,-0.5)node[sommeti]{};
\draw(2,1)node[sommeti]{};
\draw(1,0)--(3,0);
\draw(1,0)--(2,0.5)--(3,0);
\draw(1,0)--(2,-0.5)--(3,0);
\draw(1,0)--(2,1)--(3,0);
\end{tikzpicture}
\begin{tikzpicture}[scale=0.6]
\draw(0,0)node[fill=white]{};
\draw(1,0)node[sommetb]{};
\draw(3,0)node[sommetb]{};
\draw(2,0)node[sommeti]{};
\draw(2,0.5)node[sommeti]{};
\draw(2,-0.5)node[sommeti]{};
\draw(2,1)node[sommeti]{};
\draw(2,-1)node[sommeti]{};
\draw(1,0)--(3,0);
\draw(1,0)--(2,0.5)--(3,0);
\draw(1,0)--(2,-0.5)--(3,0);
\draw(1,0)--(2,1)--(3,0);
\draw(1,0)--(2,-1)--(3,0);
\end{tikzpicture}
\caption{$(G_1,B), (G_2,B), (G_3,B), (G_4,B)\text{ and }(G_5,B)$.}
\label{fig1}
\end{figure}
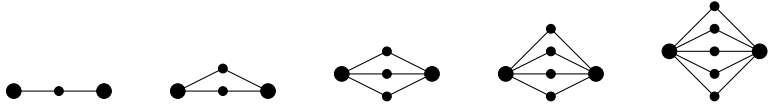
\end{exe}
If we assume that the degree of the graph, $d$, is bounded, it is easy to obtain that $\sigma_1\leq d$.
The goal of this paper is to show isoperimetric upper bounds for the Steklov eigenvalues of graphs with boundary that are included in a Cayley graph with polynomial growth (we recall the notions of geometric group theory that we use in Section 2).

\begin{definition}
A graph with boundary  $(\Gamma'=(V',E'), B)$ is included in a graph $\Gamma=(V,E)$ if $V'\subset V$ and $E'\subset E$.
\end{definition}

\begin{remark} 
The Steklov problem is defined on finite graphs with boundary. In contrast, the Cayley graphs with polynomial growth that we use as host graph are infinite.
\end{remark}

The main result is the following.

\begin{theorem}
Let $\Gamma=(V,E)$ be a Cayley graph with polynomial growth of order $D$. There exists $C(\Gamma)>0$ such that for any finite graph with boundary $(\Gamma'=(V',E'),B)$ included in $\Gamma$ and such that $|B|>1$, we have
\[
\sigma_1(\Gamma',B)\leq
\begin{cases}
C(\Gamma)\frac{1}{|B|}&\text{if }D\leq2,\\
C(\Gamma)\frac{|V'|^{\frac{D-2}{D}}}{|B|}& \text{if } D\geq 2.
\end{cases}
\]
\label{thm1}

\end{theorem}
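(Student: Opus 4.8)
The plan is to bound $\sigma_1$ from above by the Rayleigh quotient of a single, explicitly constructed test function, exploiting the fact that the harmonic extension minimises the Dirichlet energy among all extensions with prescribed boundary values. Writing $d_\Gamma(\cdot,\cdot)$ for the graph distance in the host Cayley graph $\Gamma$, $\mathcal B(x,r)$ for the ball of radius $r$ about $x$ in $\Gamma$, and $\deg$ for the (bounded) vertex degree of $\Gamma$, I would first record the variational characterisation
\[
\sigma_1(\Gamma',B)=\min\left\{\frac{\sum_{\{i,j\}\in E'}(u(i)-u(j))^2}{\sum_{i\in B}u(i)^2}\ :\ u\in\mathbb{R}^{V'},\ \triangle u=0 \text{ on } B^c,\ \sum_{i\in B}u(i)=0\right\}.
\]
Since for fixed boundary data the harmonic extension is the energy minimiser, the constraint $\triangle u=0$ may be dropped at the cost of an inequality: for \emph{any} $u\in\mathbb{R}^{V'}$ with $\sum_{i\in B}u(i)=0$, the same Rayleigh quotient is an upper bound for $\sigma_1(\Gamma',B)$. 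It therefore suffices to exhibit one such mean-zero test function with small numerator and large denominator.

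For the test function I would use a truncated distance bump. Fixing a vertex $x_0\in V$ and an integer scale $R\ge 1$, set $g(i)=\max\big(0,\,1-\tfrac{1}{R}\,d_\Gamma(x_0,i)\big)$ and let $u=g-c$, where $c=\tfrac{1}{|B|}\sum_{i\in B}g(i)$ makes $u$ mean-zero on $B$. Because $g$ is $\tfrac1R$-Lipschitz and supported on $\mathcal B(x_0,R)$, only edges meeting $\mathcal B(x_0,R)$ contribute, so
\[
\sum_{\{i,j\}\in E'}(u(i)-u(j))^2=\sum_{\{i,j\}\in E'}(g(i)-g(j))^2\le \frac{\deg}{R^2}\,|\mathcal B(x_0,R)\cap V'|\le \frac{\deg}{R^2}\min\big(|V'|,\,C R^{D}\big),
\]
the last inequality using the polynomial growth $|\mathcal B(x_0,R)|\le CR^{D}$ together with $|\mathcal B(x_0,R)\cap V'|\le|V'|$. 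The denominator is the variance of $g$ over $B$; it is bounded below by a fixed multiple of $|B|$ as soon as a fixed fraction of $B$ lies in $\mathcal B(x_0,R/2)$ (where $g\ge\tfrac12$) and a fixed fraction lies outside $\mathcal B(x_0,R)$ (where $g=0$), since no value can be close to both $\tfrac12$ and $0$. Granting such a center and scale, I would split on whether $CR^{D}\le|V'|$ or not: in the first regime the numerator is $\le \deg\,C R^{D-2}$ with $R\le(|V'|/C)^{1/D}$, and in the second it is $\le \deg\,|V'|/R^{2}$ with $R>(|V'|/C)^{1/D}$. In both cases $\min(|V'|,CR^{D})/R^{2}\le C'|V'|^{(D-2)/D}$ for $D\ge 2$, while for $D\le 2$ one has $CR^{D}/R^{2}\le C$ because $R\ge1$. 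Dividing by the lower bound $\gtrsim|B|$ on the denominator yields precisely the two cases of Theorem~\ref{thm1}.

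The crux, and the step I expect to be the main obstacle, is the separation statement: there are a constant $\beta=\beta(\Gamma)>0$, a vertex $x_0$, and an integer $R\ge1$ with
\[
|B\cap \mathcal B(x_0,R/2)|\ge \beta|B|\qquad\text{and}\qquad |B\setminus \mathcal B(x_0,R)|\ge \beta|B|.
\]
The difficulty is that the two requirements pull in opposite directions — a fixed fraction of $B$ must sit well inside the ball of radius $R/2$ while another fixed fraction sits outside the ball of radius $R$ — and the distribution of $B$ inside balls is \emph{not} controlled by the ambient volume growth alone. I would prove it by selecting both center and scale from the geometry of $B$, separating two cases. If $B$ is spread, so that no ball of radius comparable to its diameter carries almost all of $B$, a packing argument produces a center of high local mass at the correct scale while leaving enough mass outside. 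If instead $B$ concentrates, the lower volume bound $|\mathcal B(x,r)|\ge cr^{D}$ forbids this below the scale $|B|^{1/D}$, so $B$ essentially fills a ball of radius comparable to $|B|^{1/D}$; one then places $x_0$ near the boundary of that ball and takes $R$ of that order, splitting the near-full blob into inner and outer portions, each a fixed fraction. In both cases the admissible $\beta$ is governed by the growth constants $c,C,D$ and the doubling factor $2^{D}$.

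Finally I would collect the constants: $\deg$, the growth constants $c,C,D$, and $\beta$ all depend only on $\Gamma$, giving $C(\Gamma)$. The case distinction $D\le 2$ versus $D\ge2$ is exactly the distinction between the two regimes $CR^{D}\le|V'|$ and $CR^{D}>|V'|$ above, and the whole argument uses the ambient metric only through its volume growth, which is what allows the passage from the lattice $\mathbb{Z}^{n}$ treated by Han and Hua to an arbitrary Cayley graph of polynomial growth.
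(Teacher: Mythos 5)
Most of your scaffolding is sound and close in spirit to the paper's proof: the paper likewise builds Lipschitz cut-off functions on metric balls, bounds the numerator by the number of edges in a ball via polynomial growth and the bounded degree, and splits into the two regimes exactly as you do (its version of your $\min(|V'|,CR^D)/R^2$ computation is a H\"older-type interpolation between $c_4R^D$ and $\frac{c_3}{2}|V'|$ edges). Your use of a single mean-zero test function $u=g-c$, justified by energy minimality of the harmonic extension, is a legitimate variant of the paper's device, which instead takes two bumps $f_1,f_2$ with disjoint supports ($B(x_0,2R)$ and $B(x_0,2R)^c$) and invokes the min-max characterization over two-dimensional subspaces to get $\sigma_1\le\max\{R(f_1),R(f_2)\}$; both mechanisms work.

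The genuine gap is precisely where you flagged it: the separation statement is asserted, not proved, and your sketch of it does not go through. In the concentrated case, knowing that most of $B$ lies in a ball of radius comparable to $|B|^{1/D}$ (which, incidentally, follows from the \emph{upper} growth bound $|B(x,r)|\le Cr^D$, not the lower one) gives no control on how $B$ is distributed inside that ball, so placing $x_0$ ``near the boundary of that ball'' need not capture a fixed fraction in $B(x_0,R/2)$ while leaving a fixed fraction outside $B(x_0,R)$: the mass could sit entirely in the center, or in two antipodal clusters, and iterating ``then it concentrates at a smaller scale'' is circular unless you work at an extremal scale. That extremal choice is exactly the paper's argument, and it is short: let $N$ be the covering number from its Lemma 1 (a ball of radius $3R$ is covered by $N=c_1$ balls of radius $\frac{1}{2}R\le R-1$, with $N$ independent of $R$), set $\alpha=|B|/(c_1+1)$, and let $R$ be the \emph{minimal} integer such that some ball $B(x_0,R)$ contains at least $\alpha$ boundary vertices. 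Minimality forces each of the $c_1$ covering balls of $B(x_0,3R)$ to contain fewer than $\alpha$ boundary vertices, whence $|B(x_0,3R)^c\cap B|>|B|-c_1\alpha=\alpha$; this is your separation statement (with ratio $3$ in place of $2$, which is harmless). Note also that as you stated it, the separation is simply false for small boundaries --- e.g.\ $B=\{b_1,b_2\}$ with $d_\Gamma(b_1,b_2)=1$ (allowed, since $E(B,B)=\emptyset$ only constrains $E'$) admits no integer $R\ge1$ and $x_0$ with $b_1\in B(x_0,R/2)$ and $b_2\notin B(x_0,R)$ --- so, as in the paper, you must dispose of the case $|B|\le c_1+1$ separately via the trivial bound $\sigma_1\le d$, and only run the ball argument when $\alpha>1$.
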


This result can be pushed further for a particular class of graph with boundary included in the Cayley graph $\Gamma=(V,E)$, \textit{graphs with boundary induced by a subset $\Omega \subset V$}.

\begin{definition}
Let $\Gamma=(V,E)$ be a graph.
\begin{enumerate}
\item The \textit{vertex boundary} of a subset $\Omega \subset \Gamma$ is
$$\delta \Omega:=\{i\in V \setminus\Omega : \exists j \in \Omega,  i\sim j \}$$
where $i\sim j$ signifies that $\{i,j\}\in E$.
\item The set of edges between two subset $\Omega_1, \Omega_2 \subset V$ is 
$$E(\Omega_1,\Omega_2):=\{\{i,j\}\in E : i\in \Omega_1, j\in \Omega_2\}.$$
\item Given $\Omega\subsetneq V$, consider the graph $\Gamma'$ with vertex set $\bar\Omega:=\Omega \cup \delta\Omega$ and edge set $E(\Omega,\bar\Omega)$. This defines a graph with boundary, with $B=\delta\Omega$, which is called \textit{graph with boundary induced by a subset $\Omega \subsetneq V$}.
\item Given a subset $\Omega \subsetneq V$, $\sigma_1(\Omega)$ is the eigenvalue $\sigma_1$ of the graph with boundary induced by $\Omega$.

\end{enumerate}

\end{definition}

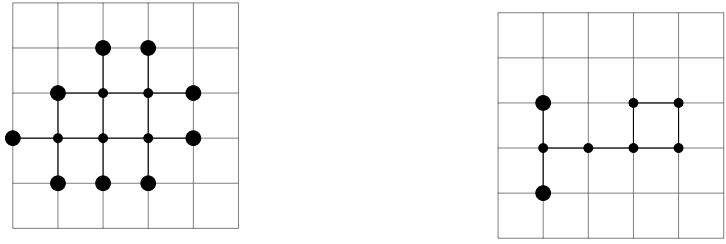
\begin{figure}[h]
\begin{tikzpicture}[scale=0.6]
\draw [very thin, gray] (3,0) grid (8,5);
\draw(0,0)node[fill=white]{};
\draw(3,2)node[sommetb]{};
\draw(4,3)node[sommetb]{};
\draw(5,2)node[sommeti]{};
\draw(5,3)node[sommeti]{};
\draw(4,2)node[sommeti]{};
\draw(6,2)node[sommeti]{};
\draw(6,3)node[sommeti]{};
\draw(4,1)node[sommetb]{};
\draw(5,1)node[sommetb]{};
\draw(6,1)node[sommetb]{};
\draw(7,2)node[sommetb]{};
\draw(7,3)node[sommetb]{};
\draw(6,4)node[sommetb]{};
\draw(5,4)node[sommetb]{};
\draw(3,2)--(7,2);
\draw(4,3)--(7,3);
\draw(4,3)--(4,1);
\draw(5,4)--(5,1);
\draw(6,4)--(6,1);
\end{tikzpicture}
\quad\quad\quad\quad\quad\quad\quad\quad
\begin{tikzpicture}[scale=0.6]
\draw [very thin, gray] (0,0) grid (5,5);
\draw(1,2)node[sommeti]{};
\draw(2,2)node[sommeti]{};
\draw(3,2)node[sommeti]{};
\draw(4,2)node[sommeti]{};
\draw(4,3)node[sommeti]{};
\draw(3,3)node[sommeti]{};
\draw(1,1)node[sommetb]{};
\draw(1,3)node[sommetb]{};
\draw(1,2)--(4,2);
\draw(3,3)--(4,3);
\draw(1,3)--(1,1);
\draw(3,3)--(3,2);
\draw(4,3)--(4,2);
\end{tikzpicture}
\caption{Two graphs with boundary included in $\mathbb{Z}^2$, but only the first one is induced by a subset of vertices of $\mathbb{Z}^2$ (the bigger vertices are boundary vertices).}
\label{fig2}
\end{figure}

Because graphs with boundary induced by a finite subset $\Omega$ of the set of vertices of a Cayley graph with polynomial growth satisfy a discrete isoperimetric inequality, we can deduce the following two corollaries of Theorem \ref{thm1} for this particular case.

\begin{cor}
Let $\Gamma=(V,E)$ be a Cayley graph with polynomial growth of order $D\geq 2$. There exists $C(\Gamma)>0$ such that for any finite connected subset $\Omega$ of the set of vertices $V$ we have
\[
\sigma_1(\Omega)\leq C(\Gamma)\frac{1}{|\delta\Omega|^{\frac{1}{D-1}}}.
\]
\label{cor1}

\end{cor}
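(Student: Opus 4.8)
The plan is to deduce the corollary directly from Theorem \ref{thm1}, applied to the graph with boundary induced by $\Omega$, and then to convert the dependence on $|V'|$ into a dependence on $|\delta\Omega|$ by invoking the isoperimetric inequality available on Cayley graphs of polynomial growth.

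First I would record the combinatorial data of the induced graph with boundary. By construction its boundary is $B=\delta\Omega$ and its vertex set is $\bar\Omega=\Omega\cup\delta\Omega$, so that $|B|=|\delta\Omega|$ and $|V'|=|\Omega|+|\delta\Omega|$; its edge set $E(\Omega,\bar\Omega)$ contains no edge inside $\delta\Omega$, so it is indeed a graph with boundary in the sense of Definition \ref{defgraph}. Since $\sigma_1(\Omega)$ is assumed to be defined we have $|\delta\Omega|>1$, and because $D\geq 2$ Theorem \ref{thm1} gives
\[
\sigma_1(\Omega)\leq C(\Gamma)\frac{|V'|^{\frac{D-2}{D}}}{|\delta\Omega|}=C(\Gamma)\frac{(|\Omega|+|\delta\Omega|)^{\frac{D-2}{D}}}{|\delta\Omega|}.
\]

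The key input is the isoperimetric inequality for groups of polynomial growth of order $D$ (the Coulhon--Saloff-Coste inequality specialized to this setting): there is a constant $c_\Gamma>0$ such that every finite $\Omega\subset V$ satisfies $|\delta\Omega|\geq c_\Gamma|\Omega|^{\frac{D-1}{D}}$, equivalently $|\Omega|\leq C_\Gamma|\delta\Omega|^{\frac{D}{D-1}}$. Since $\frac{D}{D-1}\geq 1$ and $|\delta\Omega|\geq 1$ we also have $|\delta\Omega|\leq|\delta\Omega|^{\frac{D}{D-1}}$, whence
\[
|V'|=|\Omega|+|\delta\Omega|\leq (C_\Gamma+1)\,|\delta\Omega|^{\frac{D}{D-1}}.
\]

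Finally I would substitute this bound into the estimate from Theorem \ref{thm1} and simplify the exponents. Raising to the power $\frac{D-2}{D}$ turns the exponent $\frac{D}{D-1}$ into $\frac{D-2}{D-1}$, and the factor $|\delta\Omega|^{-1}$ from the denominator then yields $\frac{D-2}{D-1}-1=-\frac{1}{D-1}$, so that
\[
\sigma_1(\Omega)\leq C(\Gamma)\,(C_\Gamma+1)^{\frac{D-2}{D}}\,|\delta\Omega|^{-\frac{1}{D-1}},
\]
which is the claimed inequality after renaming the constant to absorb $(C_\Gamma+1)^{\frac{D-2}{D}}$. Every step is a direct substitution; the only nontrivial ingredient is the isoperimetric inequality, and I expect the main point to justify with care is that it holds with a \emph{uniform} constant valid for all finite subsets (including small ones), which is exactly what the polynomial growth of $\Gamma$ guarantees.
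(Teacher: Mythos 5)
Your proposal is correct and follows essentially the same route as the paper: apply Theorem \ref{thm1} to the graph with boundary induced by $\Omega$ (so $B=\delta\Omega$, $V'=\bar\Omega$) and then use the isoperimetric inequality of Proposition \ref{prop1} to trade $|V'|^{\frac{D-2}{D}}$ for $|\delta\Omega|^{\frac{D-2}{D-1}}$, giving the exponent $-\frac{1}{D-1}$. The only cosmetic difference is that the paper bounds $|\bar\Omega|^{\frac{D-1}{D}}\leq c_1|\delta\Omega|$ directly, whereas you bound $|\Omega|$ and add $|\delta\Omega|$ separately, costing an extra harmless factor $(C_\Gamma+1)^{\frac{D-2}{D}}$.
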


\begin{cor}

Let $\Gamma=(V,E)$ be a Cayley graph with polynomial growth of order $D\geq 2$. There exists $C(\Gamma)>0$ such that for any finite connected subset $\Omega$ of the set of vertices $V$ we have
\[
\sigma_1(\Omega)\leq C(\Gamma)\frac{1}{|\bar\Omega|^{\frac{1}{D}}},
\]
where $\bar\Omega=\delta\Omega \cup \Omega$.
\label{cor2}
\end{cor}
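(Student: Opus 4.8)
The plan is to deduce Corollary \ref{cor2} from Corollary \ref{cor1} by replacing the quantity $|\delta\Omega|$ appearing in the latter with $|\bar\Omega|$, using the discrete isoperimetric inequality available on a Cayley graph of polynomial growth of order $D$. The input I would rely on is the Coulhon--Saloff-Coste type inequality recorded among the metric properties in Section 2: there is a constant $c=c(\Gamma)>0$ such that every finite $\Omega\subset V$ satisfies $|\delta\Omega|\geq c\,|\Omega|^{\frac{D-1}{D}}$.

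First I would show that this same inequality controls $|\delta\Omega|$ from below by a power of the full size $|\bar\Omega|=|\Omega|+|\delta\Omega|$, namely $|\delta\Omega|\geq c'\,|\bar\Omega|^{\frac{D-1}{D}}$ for some $c'(\Gamma)>0$. To see this I would split into two cases. If $|\delta\Omega|\geq|\Omega|$, then $|\bar\Omega|\leq 2|\delta\Omega|$, and since $\frac{D-1}{D}<1$ and $|\delta\Omega|\geq 1$ we get $|\bar\Omega|^{\frac{D-1}{D}}\leq(2|\delta\Omega|)^{\frac{D-1}{D}}\leq 2|\delta\Omega|$. If instead $|\delta\Omega|<|\Omega|$, then $|\bar\Omega|\leq 2|\Omega|$, and the isoperimetric inequality gives $|\bar\Omega|^{\frac{D-1}{D}}\leq 2|\Omega|^{\frac{D-1}{D}}\leq\frac{2}{c}|\delta\Omega|$. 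In both cases $|\bar\Omega|^{\frac{D-1}{D}}\leq C_0(\Gamma)|\delta\Omega|$, which is the desired bound with $c'=1/C_0$.

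Finally I would raise this to the power $\frac{1}{D-1}$, obtaining $|\delta\Omega|^{-\frac{1}{D-1}}\leq (c')^{-\frac{1}{D-1}}|\bar\Omega|^{-\frac{1}{D}}$, where I use that $\frac{1}{D-1}\cdot\frac{D-1}{D}=\frac{1}{D}$, and then substitute into the estimate $\sigma_1(\Omega)\leq C(\Gamma)\,|\delta\Omega|^{-\frac{1}{D-1}}$ of Corollary \ref{cor1}. This yields $\sigma_1(\Omega)\leq C'(\Gamma)|\bar\Omega|^{-\frac{1}{D}}$ with $C'(\Gamma)=C(\Gamma)(c')^{-\frac{1}{D-1}}$, as claimed. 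The only genuine content is the passage from $|\Omega|$ to $|\bar\Omega|$ in the isoperimetric inequality; since $\delta\Omega$ and $\Omega$ together make up $\bar\Omega$, the elementary case split above handles it, and I do not expect a serious obstacle once Corollary \ref{cor1} and the isoperimetric inequality are in hand. One point to keep track of is the boundary case $D=2$, where the exponent $\frac{1}{D-1}=1$ and the argument still goes through verbatim.
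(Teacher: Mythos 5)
Your proof is correct, but it takes a different route from the paper. The paper proves Corollary \ref{cor2} directly from Theorem \ref{thm1}: it takes the bound $\sigma_1\leq c_2\,|\bar\Omega|^{\frac{D-2}{D}}/|\delta\Omega|$, factors $|\bar\Omega|^{\frac{D-2}{D}}=|\bar\Omega|^{\frac{D-1}{D}}|\bar\Omega|^{-\frac{1}{D}}$, and absorbs the first factor using Proposition \ref{prop1}, in one line. You instead route through Corollary \ref{cor1}, converting $|\delta\Omega|^{-\frac{1}{D-1}}$ into $|\bar\Omega|^{-\frac{1}{D}}$ via the isoperimetric inequality; the exponent arithmetic $\frac{1}{D-1}\cdot\frac{D-1}{D}=\frac{1}{D}$ composes exactly, so the conclusion is the same, and since Corollary \ref{cor1} itself rests on Theorem \ref{thm1} and Proposition \ref{prop1}, the underlying ingredients coincide. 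One efficiency remark: your two-case argument upgrading $|\delta\Omega|\geq c\,|\Omega|^{\frac{D-1}{D}}$ to $|\delta\Omega|\geq c'\,|\bar\Omega|^{\frac{D-1}{D}}$ is sound (and $|\delta\Omega|\geq 1$ holds because $\Gamma$ is infinite and connected), but it is unnecessary here: Proposition \ref{prop1} as stated in the paper is already formulated in terms of $\bar\Omega$, namely $|\bar\Omega|^{\frac{D-1}{D}}/|\delta\Omega|\leq C$, which is precisely the inequality you reconstruct; the $|\Omega|$-form you quote is its weaker consequence. What each approach buys: the paper's direct derivation is shorter and applies Proposition \ref{prop1} only once, keeping the constants cleaner; your derivation makes explicit that Corollary \ref{cor2} is a formal consequence of Corollary \ref{cor1} plus the isoperimetric inequality, which dovetails with the paper's remark after Corollary \ref{cor2} that the bound $\sigma_1\leq C|\delta\Omega|^{-\frac{1}{D}}$ extractable from it is weaker than Corollary \ref{cor1}.
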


A direct consequence is that for a sequence of graphs with boundary induced by subsets in a Cayley graph with polynomial growth such that the number of vertices tends to infinity, $\sigma_1$ tends to zero. It is easy to find examples (see, e.g. Example 1 in \cite{PStek2018}) showing that this is not true if we do not assume that the graphs with boundary are induced by subsets of the Cayley graph.




In $\mathbb{Z}^n$, with $n\geq 2$, Corollary 2 corresponds to a recent result of Han and Hua (see Corollary 1.4 in \cite{hua2019steklov}). They show it using a very interesting method to reduce to the case of domains in $\mathbb{R}^n$, which also allows them to give explicit constants. In the contrast to the proof of the result of Han and Hua, the proof of our main result is direct because it does not use known results for domains in Euclidean space. It essentially uses the control of the growth function of the Cayley graph. The method was inspired by the methods used in \cite{CEG2011}. A straightforward example of a Cayley graph of a group of polynomial growth that is different from $\mathbb{Z}^n$ is a Cayley graph associated to the discrete Heisenberg group of dimension $3$, which has polynomial growth of order $4$. Many other examples exist (see Example \ref{exGrom}) where the result holds.

\end{section}

\begin{section}{Groups with polynomial growth and Cayley graphs}
In this article, we work in the setting of Cayley graphs of groups with polynomial growth. We recall here the definitions and the geometric group theory notions that we will use. For further details on this topic, one can see e.g. \cite{Har00}.

Let $G$ be a finitely generated infinite discrete group and S=$\{g_1,...,g_k\}$ a generating set of $G$. For $n\in \mathbb{N}^*$, we denote the ball of radius n $B(n):=\{x \in G: x=g_{i_1}^{\epsilon_1}...g_{i_n}^{\epsilon_n}, i_1,...,i_n\in\{1,...,k\}, \epsilon_j=\pm 1\}$. The growth function of $G$ is $V(n):=|B(n)|$. If there exist $D\in \mathbb{N}^*$ and $C>0$ such that
\[
C^{-1}n^D\leq V(n)\leq Cn^D,
\]
we say that the growth rate is polynomial of order $D$. Since the growth rate does not depend on the choice of generating set, we can speak of the growth type of a group.

Let $G$ be a group and $S$ a generating set that does not contain the identity element of the group and is symmetric, that is, satisfies $S=S^{-1}$. The Cayley graph $\Gamma=\Gamma(G,S)$ associated to $(G,S)$ is the graph with vertices $V=G$ and edges $E=\{\{x,y\}: x,y\in V \text{ and } \exists s\in S\text{ such that } y=xs\}$. Since $S$ is symmetric and does not contain the identity element, the graph is simple, and since $S$ is a generating set of $G$, the graph is connected. We say that a Cayley graph has polynomial growth of order $D$ if it is associated to a group with polynomial growth of order $D$.

We now give two properties of Cayley graphs with polynomial growth that we will need to prove our results.

\begin{lemma}
Let $\Gamma=(V,E)$ be a Cayley graph with polynomial growth of order $D$. Let $a,b\in\mathbb{R}^*_+$ and $B(x,aR)$ be a ball in $\Gamma$ of radius $aR$. Then $\exists N\in \mathbb{N}^*$ such that $B(x,aR)$ is the union of $N$ balls of radius $bR$ and this number does not depend on $R$. More precisely, we can take $N=\lceil C^2(\frac{2a+b}{b})^D\rceil$ where $C$ is a constant satisfying $C^{-1}n^D\leq V(n) \leq Cn^D$.
\label{lem}
\end{lemma}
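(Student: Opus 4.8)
The plan is to cover the large ball $B(x,aR)$ by balls of radius $bR$ whose centers are chosen greedily so that they form a maximal $bR$-separated net, and then to bound the cardinality of this net using the polynomial growth estimate. I would first pick a maximal set of points $\{x_1,\dots,x_N\}\subset B(x,aR)$ that are pairwise at distance greater than $bR$ (such a set exists and is finite because $B(x,aR)$ is finite). Maximality guarantees that every point of $B(x,aR)$ lies within distance $bR$ of some $x_i$, so the balls $B(x_i,bR)$ already cover $B(x,aR)$; it remains only to bound $N$ independently of $R$.

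To bound $N$, I would use the standard packing-versus-covering trick. Since the $x_i$ are pairwise more than $bR$ apart, the balls $B(x_i,\tfrac{b}{2}R)$ are pairwise disjoint. Each such ball is contained in the enlarged ball $B(x,(a+\tfrac{b}{2})R)$, because $x_i\in B(x,aR)$ and the smaller balls have radius $\tfrac{b}{2}R$. Disjointness then gives
\[
\sum_{i=1}^{N}\bigl|B(x_i,\tfrac{b}{2}R)\bigr|\leq \bigl|B(x,(a+\tfrac{b}{2})R)\bigr|.
\]
Here the homogeneity of the Cayley graph is essential: translation by a group element is a graph automorphism, so every ball of a given radius has the same cardinality, namely $V(r)$ for radius $r$. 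I would therefore rewrite the inequality as $N\,V(\tfrac{b}{2}R)\leq V((a+\tfrac{b}{2})R)$.

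Applying the two-sided polynomial bound $C^{-1}n^D\leq V(n)\leq Cn^D$ then yields
\[
N\leq \frac{V((a+\tfrac{b}{2})R)}{V(\tfrac{b}{2}R)}\leq \frac{C\bigl((a+\tfrac{b}{2})R\bigr)^D}{C^{-1}\bigl(\tfrac{b}{2}R\bigr)^D}=C^2\Bigl(\frac{a+\tfrac{b}{2}}{\tfrac{b}{2}}\Bigr)^D=C^2\Bigl(\frac{2a+b}{b}\Bigr)^D,
\]
in which the factor $R^D$ cancels, giving the claimed $R$-independence; taking the ceiling produces the stated $N=\lceil C^2(\tfrac{2a+b}{b})^D\rceil$. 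The only mild subtlety — and the step I would be most careful about — is the passage from the continuous radii $aR,\,bR,\,\tfrac{b}{2}R$ to the integer arguments required by the growth function $V$, since $V$ is defined only on $\mathbb{N}^*$; this is handled by replacing radii with their floors or ceilings and absorbing the resulting constant-order discrepancies into $C$, which does not affect the $R$-independence of $N$.
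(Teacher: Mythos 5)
Your proof is correct and takes essentially the same approach as the paper: the same maximal $bR$-separated net in $B(x,aR)$, the same packing of pairwise disjoint balls $B(x_i,\tfrac{b}{2}R)$ inside $B(x,(a+\tfrac{b}{2})R)$, and the same cancellation of $R^D$ via the two-sided polynomial growth bound to get $N\leq C^2(\tfrac{2a+b}{b})^D$. Your closing remark about rounding the non-integer radii (and your use of strict $>bR$ separation, which makes the disjointness of the half-radius balls immediate) addresses minor points the paper passes over silently, but does not change the argument.
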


\begin{proof} Let $\{y_i\}_{i=1}^{m}$ be a maximal subset of vertices in $B(x,aR)$ such that $d(y_i,y_j)\geq bR \text{ for } i\not = j$. Then $\cup_{i=1}^n B(y_i,bR)\supset B(x,aR)$ and, by the triangle inequality, $B(y_i,\frac{bR}{2})\cap B(y_j,\frac{bR}{2})=\emptyset$. This implies
\begin{align}
\sum_{i=1}^m |B(y_i,\frac{bR}{2})| \leq |B(x,(a+\frac{b}{2})R)|.
\label{lem1}
\end{align}
Since the graph has polynomial growth of order $D$, we know that there exists $C$ such that $C^{-1}n^D\leq |B(z,n)|\leq C n^D$  $\forall z\in V$. We approximate the volume of the balls in equation (\ref{lem1}) using the latter inequality and we obtain that $m\leq C^2(\frac{2a+b}{b})^D$.
\end{proof}

The second property is a discrete isoperimetric inequality.

\begin{prop}
Let $\Gamma=(V,E)$ be a Cayley graph with polynomial growth of order $D$. There exists $C$ such that for any finite subset $\Omega\subset V$, $\delta \Omega$ its boundary, and $\bar\Omega:=\Omega\cup\delta\Omega$, we have that
\begin{align}
\frac{|\bar\Omega|^{\frac{(D-1)}{D}}}{|\delta\Omega|}\leq C.
\end{align}
\label{prop1}
\end{prop}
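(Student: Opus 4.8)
The plan is to prove the equivalent lower bound $|\delta\Omega|\geq c\,|\bar\Omega|^{(D-1)/D}$ for a constant $c>0$ depending only on $\Gamma$; this is a version of the Coulhon--Saloff-Coste isoperimetric inequality specialized to polynomial growth. The engine is a double counting over the translates of $\Omega$ by a ball of a suitable radius $R$, comparing the total ``mass that leaves $\Omega$ under translation'' to the size of the boundary of $\Omega$. First I would pass from the vertex boundary to the directed edge boundary $\partial\Omega:=\{(x,s):x\in\Omega,\ s\in S,\ xs\notin\Omega\}$: every such edge has its outer endpoint in $\delta\Omega$ and each vertex meets exactly $|S|$ edges, so $|\partial\Omega|\leq|S|\,|\delta\Omega|$, and it suffices to bound $|\partial\Omega|$ from below.

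The first key step is a translation estimate: for every $g\in G$ one has $|\Omega\,\triangle\,\Omega g|\leq 2\,|g|\,|\partial\Omega|$, where $\triangle$ denotes symmetric difference and $|g|$ the word length of $g$. To obtain it I would write $g=s_1\cdots s_k$ with $k=|g|$ and $s_i\in S$, apply the triangle inequality for the symmetric difference along the telescoping chain $\Omega,\ \Omega s_1,\ \Omega s_1s_2,\dots,\Omega g$, and use that counting measure is invariant under right multiplication to reduce each term $|\Omega s_1\cdots s_{i-1}\,\triangle\,\Omega s_1\cdots s_i|$ to $|\Omega\,\triangle\,\Omega s_i|\leq 2|\partial\Omega|$.

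The second key step is the double counting. I would choose $R$ to be the smallest integer with $V(R)\geq 2|\Omega|$ and sum the previous inequality over $g\in B(R)$. On one hand $\sum_{g\in B(R)}|\Omega\setminus\Omega g|\leq 2R\,V(R)\,|\partial\Omega|$, using $|g|\leq R$. On the other hand, rewriting $x\notin\Omega g$ as $xg^{-1}\notin\Omega$ and using that $xB(R)$ has $V(R)\geq 2|\Omega|$ elements, of which at most $|\Omega|$ lie in $\Omega$, each $x\in\Omega$ contributes at least $V(R)-|\Omega|\geq|\Omega|$ terms, so the sum is at least $|\Omega|^2$. Comparing the two bounds gives $|\partial\Omega|\geq|\Omega|^2/(2R\,V(R))$. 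Here the polynomial growth hypothesis enters: the defining inequalities $C^{-1}n^D\leq V(n)\leq Cn^D$ together with the minimality of $R$ force $R\asymp|\Omega|^{1/D}$ and hence $R\,V(R)\asymp|\Omega|^{(D+1)/D}$, which yields $|\delta\Omega|\geq c\,|\Omega|^{(D-1)/D}$.

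It remains to replace $|\Omega|$ by $|\bar\Omega|=|\Omega|+|\delta\Omega|$, which I would do by a case split. If $|\delta\Omega|\leq|\Omega|$ then $|\bar\Omega|\leq 2|\Omega|$ and the bound above applies directly up to the factor $2^{(D-1)/D}$; if $|\delta\Omega|>|\Omega|$ then $|\bar\Omega|<2|\delta\Omega|$, and since the exponent $(D-1)/D\leq 1$ and $|\delta\Omega|\geq 1$ we get $|\bar\Omega|^{(D-1)/D}\leq 2|\delta\Omega|$. In either case $|\bar\Omega|^{(D-1)/D}\leq C\,|\delta\Omega|$, which is the claim. The main obstacle is the translation estimate together with the bookkeeping of the double counting, namely keeping left versus right multiplication straight and exploiting the symmetry $B(R)=B(R)^{-1}$; once the clean inequality $|\partial\Omega|\geq|\Omega|^2/(2R\,V(R))$ is secured, inserting the growth bounds and running the case analysis are routine.
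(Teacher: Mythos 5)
The paper itself does not prove Proposition~\ref{prop1}: it is quoted as the first particular case of Theorem~1 of Coulhon and Saloff-Coste \cite{CS-C}, with no argument given. Your proposal therefore supplies what the paper omits, and what you reconstruct is precisely the standard Coulhon--Saloff-Coste argument: a translation estimate $|\Omega\,\triangle\,\Omega g|\leq 2|g|\,|\partial\Omega|$, an averaging over the ball $B(R)$ with $R$ minimal such that $V(R)\geq 2|\Omega|$ (where the symmetry $B(R)=B(R)^{-1}$ and $|xB(R)\cap\Omega|\leq|\Omega|$ give the lower bound $|\Omega|^2$ for the sum), and the polynomial growth bounds forcing $R\asymp|\Omega|^{1/D}$, hence $|\partial\Omega|\gtrsim|\Omega|^{(D-1)/D}$. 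The reduction $|\partial\Omega|\leq|S|\,|\delta\Omega|$ and the final case split ($|\delta\Omega|\leq|\Omega|$ versus $|\delta\Omega|>|\Omega|$) to replace $|\Omega|$ by $|\bar\Omega|$ are both correct, so your proof is self-contained where the paper's is a citation.

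One step is wrong as written, though easily repaired --- and you flagged the danger yourself when you mentioned ``keeping left versus right multiplication straight.'' In the prefix chain $\Omega,\ \Omega s_1,\ \Omega s_1s_2,\dots$, right-invariance of counting measure does \emph{not} reduce $|\Omega s_1\cdots s_{i-1}\,\triangle\,\Omega s_1\cdots s_i|$ to $|\Omega\,\triangle\,\Omega s_i|$: multiplying both sets on the right by $(s_1\cdots s_{i-1})^{-1}$ yields $|\Omega\,\triangle\,\Omega\,(s_1\cdots s_{i-1})s_i(s_1\cdots s_{i-1})^{-1}|$, i.e.\ a translate by a \emph{conjugate} of $s_i$, which need not be a generator. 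This is harmless in $\mathbb{Z}^n$ but fails exactly in the nonabelian examples (e.g.\ the Heisenberg group) that motivate the paper. The fix is one line: peel generators off the \emph{left}, using the suffix chain $\Omega,\ \Omega s_k,\ \Omega s_{k-1}s_k,\ \dots,\ \Omega s_1\cdots s_k$, so that consecutive sets satisfy $\Omega u\,\triangle\,\Omega s_iu=(\Omega\,\triangle\,\Omega s_i)\,u$ with $u=s_{i+1}\cdots s_k$, and right-invariance applies verbatim; equivalently, establish the subadditivity $|\Omega\,\triangle\,\Omega gh|\leq|\Omega\,\triangle\,\Omega h|+|\Omega\,\triangle\,\Omega g|$ from $\Omega h\,\triangle\,\Omega gh=(\Omega\,\triangle\,\Omega g)h$ and induct on word length. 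With this correction (and the implicit standing assumption $\Omega\neq\emptyset$, which guarantees $|\delta\Omega|\geq1$ since $\Gamma$ is infinite and connected), your argument is complete.
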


For the proof of this proposition, we refer to \cite{CS-C}. In fact, the result that we give corresponds to the first particular case of Theorem $1$ of \cite{CS-C}, but formulated in the setting of Cayley graphs.

\end{section}

\begin{section}{Isoperimetric upper bound for $\sigma_1$ in Cayley graphs with polynomial growth}

In this section, we prove the results presented in the introduction and give examples of application.

The following variational characterization of the Steklov eigenvalues on graphs with boundary is important for the proof of our main result, Theorem 1.

\begin{align}
\sigma_j=\min_E \max_{v\in E,v\not =0}R(v),
\label{sigmak}
\end{align}
where $E$ is the set of all linear subspaces of $\mathbb{R}^V$ of dimension $j+1$, and $R(v)$ is the Rayleigh quotient associated to the Dirichlet-to-Neumann operator (see \cite{hua2017})

\[
R(v):=\frac{\sum_{i\sim j}(v(i)-v(j))^2}{\sum_{i\in B}v(i)^2}.
\]

\begin{subsection}{Proof of Theorem \ref{thm1}}

The proof consists of finding two regions of the graph with boundary with a sufficient number of vertices of the boundary, then building test functions, evaluating their Rayleigh quotient, and using the variational characterization in order to obtain an upper bound for $\sigma_1$.

\begin{proof}
By Lemma \ref{lem}, there exists $c_1$ such that a ball of radius $3R$ in $\Gamma$ is the union of $c_1$ balls of radius $\frac{1}{2}R$.

If $|B|\leq c_1+1$, it is easy to show that the result is true using that $\sigma_1$ is bounded from above by $d$, the degree of the host Cayley graph: we have that

\[
\sigma_1\leq d=\frac{d|B|}{|B|}\leq \frac{d(c_1+1)}{|B|}=:c_2\frac{1}{|B|}
\]

and, if $D\geq 2$,

\[
\sigma_1\leq c_2\frac{1}{|B|} \leq c_2\frac{|V'|^{\frac{D-2}{D}}}{|B|}.
\]

From now on, we will assume $|B|>c_1+1$. We define

\[
\alpha:=\frac{|B|}{c_1+1}.
\]

Let $x\in V$. We set

\[
r_x:=\min\{r \in \mathbb{N}: |B(x,r)\cap B|\geq\alpha\}
\]
 and
 
 \[
R:=\min_{x\in V}r_x.
\]

Then, we have that $\forall x \in V$, $|B(x,R-1)\cap B|<\alpha$ and there exists $x_0 \text{ such that } |B(x_0,R)\cap B|\geq \alpha$. We remark that $R\geq 1$. Since $B(x,R-1)\geq B(x,\frac{1}{2}R)$ we have that $B(x_0,3R)$ is the union of $c_1$ balls of radius $R-1$. This implies

\[
|B(x_0,3R)\cap B|<c_1 \alpha
\]

and consequently

\begin{align*}
|B(x_0,3R)^c\cap B|&=|B|-|B(x_0,3R)\cap B|\\
&>|B|-c_1\alpha\\
&=|B|-c_1\frac{|B|}{c_1+1}\\
&=\frac{|B|}{c_1+1}=\alpha.
\end{align*}

Hence, we have found two regions, $B(x_0,R)$ and $B(x_0,3R)^c$, such that

\[
|B(x_0,R)\cap B|\geq\alpha
\]

and 

\[
|B(x_0,3R)^c\cap B|>\alpha.
\]

We define two test functions, one with support $B(x_0,2R)$, and the other with support $B(x_0,2R)^c$.

\[f_1(y)=
\begin{cases}
1 &\text{if }y\in B(x_0,R)\\
1-\frac{k}{R} &\text{if } k:=d(y,B(x_0,R))\leq R\\
0 &\text{otherwise},
\end{cases}
\]

\[f_2(y)=
\begin{cases}
1 &\text{if }y\in B(x_0,3R)^c\\
1-\frac{k}{R} &\text{if } k:= d(y,B(x_0,3R)^c)\leq R\\
0 &\text{otherwise}.
\end{cases}
\]

We consider the linear subspace $W$ of $\mathbb{R}^V$ generated by $f_1$ and $f_2$. The variational characterization of equation (\ref{sigmak}) gives 

\[
\sigma_1\leq \max_{v\in W}R(v).
\]

Since $f_1$ and $f_2$ have disjoint support, it implies

\[
\sigma_1\leq \max\{R(f_1),R(f_2)\}.
\]

$R(f_1)$ can be evaluated in the following way. The denominator is

\[
\sum_{i\in B}f_1(i)^2\geq|B(x_0,R)\cap B|\geq\alpha=\frac{|B|}{c_1+1}.
\]

The only edges contributing to the sum in the numerator $\sum_{i\sim j}(f_1(i)-f_1(j))^2$ are the ones in $B(x_0,2R)\setminus B(x_0,R)$. In this annulus, for two adjacent vertices, we have that $(f_1(i)-f_1(j))^2\leq\frac{1}{R^2}$. Moreover, the number of edges in this annulus is smaller than or equal to the number of edges in $B(x_0,3R)$.  Hence we have

\[
\sum_{i\sim j}(f_1(i)-f_1(j))^2\leq\sum_{i\sim j, i,j\in B(x_0,3R)}\frac{1}{R^2}.
\]

Because the graph has polynomial growth of order $D$, there exists $c_3>0 $ such that $|B(x_0,3R)|\leq c_3 (3R)^D$. We recall that the graph is the Cayley graph defined by a group $G$ and a generating set $S$ of $G$. The degree of the graph is $|S|=|B(y,1)|\leq c_3$.
By the handshaking lemma, $|E(B(x_0,3R),B(x_0,3R))|\leq \frac{1}{2} |B(x_0,3R)||S|\leq \frac{1}{2}c_3^2(3R)^D:=c_4 R^D$. Consequently, for $D=1$ or $D=2$, we have

\[
\sum_{i\sim j, i,j\in B(x_0,3R)}\frac{1}{R^2}\leq c_4\frac{R^D}{R^2}\leq c_4
\]

and the Rayleigh quotient of $f_1$ becomes 

\[
R(f_1)=\frac{\sum_{i\sim j}(f_1(i)-f_1(j))^2}{\sum_{i\in B}f_1(i)^2}\leq\frac{(c_1+1)c_4}{|B|}=:\frac{c_5}{|B|}
\]

If $D\geq 2$, we note that we have the following equality

\[
\sum_{i\sim j, i,j\in B(x_0,3R)}\frac{1}{R^2}= (\sum_{i\sim j, i,j\in B(x_0,3R)}\frac{1}{R^D})^{\frac{2}{D}}(\sum_{i\sim j, i,j\in B(x_0,3R)}1)^{\frac{D-2}{D}}
\]

The left factor is bounded by a constant:

\[
(\sum_{i\sim j, i,j\in B(x_0,3R)}\frac{1}{R^D})^{\frac{2}{D}}\leq c_4^{\frac{2}{D}}=:c_6.
\]

For the right factor, we have

\[
(\sum_{i\sim j, i,j\in B(x_0,3R)}1)^{\frac{D-2}{D}}\leq (\frac{c_3}{2}|V'|)^{\frac{D-2}{D}},
\]

and we obtain

\[
\sum_{i\sim j, i,j\in B(x_0,3R)}\frac{1}{R^2}\leq c_6(\frac{c_3}{2})^{\frac{D-2}{D}}|V'|^{\frac{D-2}{D}}=:c_7|V'|^{\frac{D-2}{D}}.
\]

Hence, if $D\geq 2$, the numerator of the Rayleigh quotient satisfies

\[
\sum_{i\sim j}(f_1(i)-f_1(j))^2\leq c_7|V'|^{\frac{D-2}{D}}.
\]

The Rayleigh quotient of $f_1$ becomes 

\[
R(f_1)=\frac{\sum_{i\sim j}(f_1(i)-f_1(j))^2}{\sum_{i\in B}f_1(i)^2}\leq\frac{(c_1+1)c_7|V'|^{\frac{D-2}{D}}}{|B|}=:c_8\frac{|V'|^{\frac{D-2}{D}}}{|B|}.
\]

By the definition of the test functions, the same upper bound can be obtained for $f_2$. We conclude that

\[
\sigma_1\leq \max\{R(f_1),R(f_2)\}\leq
\begin{cases}
c_5\frac{1}{|B|}&\text{if }D\leq2,\\
c_8\frac{|V'|^{\frac{D-2}{D}}}{|B|}&\text{if }D\geq2.
\end{cases}
\]

In order to unify the case $|B|\leq c_1+1$ and the general case, we take $C:=\max\{c_5,c_2\}$ if $D\leq2$ or $C:=\max\{c_8,c_2\}$ if $D\geq 2$. This completes the proof.

\end{proof}

\begin{remark}
The proof is qualitative rather than quantitative since the goal here is not to find an optimal constant (the constant depends on the generating set of the group).
\end{remark}

\begin{exe}
An example of a group with polynomial growth of order $D$ is $\mathbb{Z}^D$.
\end{exe}

\begin{exe}
The Heisenberg group over $\mathbb{Z}$,

\[
Heis(\mathbb{Z})=\left\{\begin{pmatrix}
1 & x & z \\
0 & 1 & y \\
0 & 0 & 1
\end{pmatrix}: x,y,z \in \mathbb{Z}\right\},
\]
is an example of a group with polynomial growth of order $4$, which is not quasi-isometric to $\mathbb{Z}^4$ (on this affirmation, see \cite{GHarpe}, p. 13, for example). Hence, for the Steklov problem on a graph with boundary $(\Gamma'=(V',E'),B)$ included in a Cayley graph associated to the Heisenberg group, $\sigma_1$ is bounded from above by $C(Heis(\mathbb{Z}))\frac{|V'|^{1/2}}{|B|}$.
\label{exHeis}
\end{exe}

\begin{exe}
An important theorem due to M. Gromov characterizes finitely generated groups of polynomial growth (see \cite{Gr81}).  It says that that a group is of polynomial growth if and only if it has a nilpotent subgroup of finite index. Lattices in nilpotent Lie groups, which are finitely generated and themselves nilpotent are other examples where the theorem holds (for the existence of such lattices, see e.g. \cite{raghunathan} and \cite{Eberlein}).
\label{exGrom}
\end{exe}

\begin{remark}
Given a Cayley graph $\Gamma=(V,E)$ with polynomial growth of order $1$ or $2$, Theorem \ref{thm1} shows that for a sequence $\{(\Gamma'_n,B_n)\}_{n\in\mathbb{N}}$ of graphs with boundary included in $\Gamma$ and satisfying $|B_n| \rightarrow \infty$, we have that $\sigma_1$ tends to $0$ as $n$ tends to infinity.
\end{remark}

\end{subsection}

\begin{subsection}{Application to subgraphs}

\begin{proof}[Proof of Corollary \ref{cor1}]
By the isoperimetric inequality in Proposition \ref{prop1}, there exists $c_1>0$ such that $\frac{|\bar\Omega|^{\frac{(D-1)}{D}}}{|\delta\Omega|}\leq c_1$, where $\bar\Omega=\delta\Omega\cup\Omega$. We raise the latter inequality to the power of $\frac{D-2}{D-1}$ and obtain $|\bar\Omega|^{\frac{D-2}{D}}\leq (c_1|\delta\Omega|) ^{\frac{D-2}{D-1}}=:c_2 |\delta\Omega| ^{\frac{D-2}{D-1}}$. By Theorem \ref{thm1}, there exists $c_3$ such that $\sigma_1\leq c_3\frac{|\bar\Omega|^{\frac{D-2}{D}}}{|\delta \Omega|}$. Consequently,

\[
\sigma_1\leq c_3\frac{|\bar\Omega|^{\frac{D-2}{D}}}{|\delta\Omega|}\leq c_3 c_2\frac{|\delta\Omega|^{\frac{D-2}{D-1}}}{|\delta\Omega|}= c_3 c_2 \frac{1}{|\delta\Omega|^{\frac{1}{D-1}}}=:c_4\frac{1}{|\delta\Omega|^{\frac{1}{D-1}}}.
\]
\end{proof}

\begin{remark}
For $D=1$, we remark that by Theorem \ref{thm1}, we have that $\sigma_1(\Omega)\leq C(\Gamma)\frac{1}{|\delta\Omega|}$.
\end{remark}

\begin{proof}[Proof of Corollary \ref{cor2}]
By the isoperimetric inequality in Proposition \ref{prop1}, there exists $c_1>0$ such that $\frac{|\bar\Omega|^{\frac{(D-1)}{D}}}{|\delta\Omega|}\leq c_1$. By Theorem \ref{thm1}, there exists $c_2$ such that $\sigma_1\leq c_2\frac{|\bar\Omega|^{\frac{D-2}{D}}}{|\delta \Omega|}$. Hence, we have

\[
\sigma_1\leq c_2\frac{|\bar\Omega|^{\frac{D-2}{D}}}{|\delta\Omega|}=c_2\frac{|\bar\Omega|^{\frac{D-1}{D}}|\bar\Omega|^{\frac{-1}{D}}}{|\delta\Omega|}\leq c_2 c_1 |\bar\Omega|^{\frac{-1}{D}}=:c_3\frac{1}{|\bar\Omega|^{\frac{1}{D}}}.
\]
\end{proof}

\begin{remark}
Since $\bar\Omega=\delta\Omega\cup\Omega$, we also have $\sigma_1\leq C(\Gamma)\frac{1}{|\Omega|^{\frac{1}{D}}}$ and $\sigma_1\leq C(\Gamma)\frac{1}{|\delta\Omega|^{\frac{1}{D}}}$ but this last bound is weaker than Corollary \ref{cor1}.
\end{remark}

\begin{remark}
In a Cayley graph with polynomial growth of order $D\geq2$, for a sequence $\{\Omega_n\}_{n\in\mathbb{N}}$ of finite subsets satisfying $|\Omega_n| \rightarrow \infty$, we have that $\sigma_1(\Omega_n)$ tends to $0$ as $n$ tends to infinity. 
\end{remark}

\begin{remark} For graphs with boundary induced by a finite subset of $\mathbb{Z}^n$, the result of Corollary \ref{cor2} was recently obtained by Han and Hua (see Corollary 1.4 in \cite{hua2019steklov}), who also give an explicit constant.
\end{remark}

\end{subsection}

\end{section}

\begin{paragraph}{Acknowledgements}
I would like to thank the anonymous referee for the helpful comments and suggestions to improve the manuscript.
\end{paragraph}

\bibliography{biblio}{}
\bibliographystyle{plain}

\bigskip
\footnotesize

\textsc{Université de Neuchâtel, Institut de mathématiques, Rue Emile-Argand 11, CH-2000 Neuchâtel, Switzerland}\par\nopagebreak
\textit{E-mail address}, \texttt{helene.perrin@unine.ch}

\end{document}